\def\eps{\varepsilon}
\theoremstyle{plain}
\newtheorem{theorem}{Theorem}
\newtheorem{lemma}[theorem]{Lemma}
\newtheorem{corollary}[theorem]{Corollary}
\newtheorem{proposition}[theorem]{Proposition}
\theoremstyle{definition}
\newtheorem{definition}[theorem]{Definition}
\theoremstyle{remark}
\title{\bf Subgraphs with large minimum $\ell$-degree in hypergraphs where almost all $\ell$-degrees are large}
\author{Victor Falgas-Ravry\thanks{Supported by VR  starting grant 2016-03488} \\
	\small Institutionen f\"or matematik och matematisk statistik\\[-0.8ex]
	\small Ume{\aa} Universitet\\[-0.8ex]
	\small  Ume{\aa}, Sweden\\
	\small \tt victor.falgas-ravry@umu.se\\
	\and
	Allan Lo\thanks{Supported by EPSRC first grant EP/P002420/1} \\
	\small Department of Mathematics\\ [-0.8ex]
	\small University of Birmingham\\ [-0.8ex]
	\small Birmingham, England\\
	\small \tt s.a.lo@bham.ac.uk}
\begin{document}
	
	\maketitle
	
	
	\begin{abstract}
	
	Let $G$ be an $r$-uniform hypergraph on $n$ vertices such that all but at most $\eps \binom{n}{\ell}$ $\ell$-subsets of vertices have degree at least $p \binom{n-\ell}{r-\ell}$.
	We show that $G$ contains a large subgraph with high minimum $\ell$-degree.
			
		\bigskip\noindent \textbf{Keywords:} $r$-uniform hypergraphs, $\ell$-degree, extremal hypergraph theory
	\end{abstract}
	\section{Introduction}\label{secion: introduction}
	Given $r \in \mathbb{N}$ and a set~$A$, we write $A^{(r)}$ for the collection of all $r$-subsets of~$A$ and $[n]$ for the set~$\{1,2,\ldots n\}$. An \emph{$r$-graph}, or \emph{$r$-uniform hypergraph}, is a pair $G=(V,E)$, where $V=V(G)$ is a set of vertices and $E=E(G)\subseteq V^{(r)}$ is a collection of $r$-subsets, which constitute the edges of~$G$. We say $G$ is \emph{nonempty} if it contains at least one edge and set $v(G)=\vert V(G)\vert$ and $e(G)=\vert E(G)\vert$. A \emph{subgraph} of~$G$ is an $r$-graph~$H$ with $V(H)\subseteq V(G)$ and $E(H)\subseteq E(G)$. The subgraph of $G$ \emph{induced} by a set $X\subseteq V(G)$ is $G[X]=(X, E(G)\cap X^{(r)})$. 

Let $\mathcal{F}$ be a family of nonempty $r$-graphs. 
If $G$ does not contain a copy of a member of~$\mathcal{F}$ as a subgraph, we say that $G$ is \emph{$\mathcal{F}$-free}. 
The \emph{Tur\'an number} $\mathrm{ex}(n, \mathcal{F})$ of a family~$\mathcal{F}$ is the maximum number of edges in an $\mathcal{F}$-free $r$-graph on $n$ vertices, and its \emph{Tur\'an density} is the limit $\pi(\mathcal{F})=\lim_{n\rightarrow \infty} \mathrm{ex}(n,\mathcal{F})/\binom{n}{r}$ (this is easily shown to exist). Let $K_t^{(r)}=([t], [t]^{(r)})$ denote the complete $r$-graph on $t$ vertices. Determining $\pi(K_t^{(r)})$ for \emph{any} $t>r\ge3$ is a major problem in extremal combinatorics. Tur\'an~\cite{MR0018405} famously conjectured in 1941 that $\pi(K_4^{(3)})= 5/9$, and despite much research effort this remains open~\cite{MR2866732}.
In this paper we shall be interested in some variants of Tur\'an density.

The \emph{neighbourhood $N(S)$} of an $\ell$-subset $S \in V(G)^{(\ell)}$ is the collection of $(r-\ell)$-subsets $T \in V(G)^{(r-\ell)}$ such that $S \cup T$ is an edge of~$G$. The \emph{degree} of~$S$ is the number $\deg(S)$ of edges of~$G$ containing~$S$, that is, $\deg(S) = |N(S)|$.
 The minimum $\ell$-degree of~$G$, $\delta_{\ell}(G)$, is defined to be the minimum of $\deg(S)$ over all $\ell$-subsets $S\in V(G)^{(\ell)}$. 
The \emph{Tur\'an $\ell$-degree threshold} $\mathrm{ex}_{\ell}(n,\mathcal{F})$ of a family~$\mathcal{F}$ of $r$-graphs is the maximum of~$\delta_{\ell}(G)$ over all $\mathcal{F}$-free $r$-graphs~$G$ on $n$ vertices. It can be shown~\cite{MubayiZhao07, LoMarkstrom12} that the limit $\pi_\ell(\mathcal{F})=\lim_{n\rightarrow \infty} \mathrm{ex}_\ell(n,\mathcal{F})/\binom{n-\ell}{r-\ell}$ exists; this quantity is known as the \emph{Tur\'an $\ell$-degree density} of~$\mathcal{F}$. A simple averaging argument shows that 
	\[0 \leq \pi_{r-1}(\mathcal{F}) \le \dots \le \pi_2(\mathcal{F}) \leq \pi_1 (\mathcal{F}) = \pi(\mathcal{F}) \leq 1,\]
	and it is known that $\pi_\ell(\mathcal{F})\neq \pi(\mathcal{F})$ in general (for $\ell \notin \{0,1\}$). In the special case where $(r, \ell)=(r, r-1)$, $\pi_{r-1}(\mathcal{F})$ is known as the \emph{codegree density} of $\mathcal{F}$.

There has been much research on Tur\'an $\ell$-degree threshold for $r$-graphs when $(r,\ell) = (3,2)$.  
In the late 1990s, Nagle~\cite{Nagle99} and Nagle and Czygrinow~\cite{CzygrinowNagle01} conjectured that $\pi_2(K_4^{(3)-})=1/4$ and $\pi_2(K_4^{(3)})=1/2$, respectively. Here $K_4^{(3)-}$ denotes the $3$-graph obtained by removing one edge from $K_4^{(3)}$. Falgas-Ravry, Pikhurko, Vaughan and Volec~\cite{FalgasRavryPikhurkoVaughanVolec17, FalgasRavryPikhurkoVaughanVolec16+} recently proved $\pi_{2}(K_4^{(3)-})=1/4$, settling the conjecture of Nagle, and showed all near-extremal constructions are close (in edit distance) to a set of quasirandom tournament constructions of Erd{\H o}s and Hajnal~\cite{ErdosHajnal72}. The lower bound $\pi_2(K_4^{(3)})\geq 1/2$ also comes from a quasirandom construction, which is due to R\"odl~\cite{Rodl86}. For $t>r\geq 3$, the codegree density $\pi_{r-1}(K_t^{(r)})$ has been studied by Falgas-Ravry~\cite{FalgasRavry13}, Lo and~Markstr\"om~\cite{LoMarkstrom12} and Sidorenko~\cite{Sid17}. Recently, Lo and Zhao~\cite{LoZhao18} showed that $1- \pi_{r-1}(K_t^{(r)}) = \Theta (\ln t /t^{r-1}) $ for $r \ge 3$.


One variant of $\ell$-degree Tur\'an density is to study $r$-graphs in which almost all $\ell$-subsets have large degree. 
To be precise, given $\eps >0$, let $\delta_{\ell}^{\eps}(G)$  be the largest integer $d$ such that all but at most $\eps \binom{v(G)}{\ell}$ of the $\ell$-subsets $S \in V(G)^{(\ell)}$ satisfy $\deg(S) \ge d$. 
Note that $r$-graphs with large $\delta_{\ell}^{\eps}(G)$ but with small $\delta_{\ell} (G)$ arise naturally. 
For instance, the reduced graphs~$R$ obtained from $r$-graphs with large minimum $\ell$-degree after an application of hypergraph regularity lemma have large $\delta^{\eps}_{\ell}(R)$.

\begin{definition}[$(r,\ell)$-sequence] \label{def:1}
		Let $1 \le \ell < r$.
		We say that a sequence $\mathbf{G}=(G_n)_{n\in \mathbb{N}}$ of $r$-graphs is an \emph{$(r,\ell)$-sequence} if 
		\begin{enumerate}[(i)]
			\item $v(G_n)\rightarrow \infty$ as $n\rightarrow \infty$ and
			\item there is a constant $p \in [0,1]$ and a sequence of nonnegative reals $\varepsilon_n \rightarrow 0$ as $n \rightarrow \infty$ such that $\delta_{\ell}^{\eps_n}(G_n) \ge p \binom{v(G_n)-\ell}{r-\ell}$ for each $n$. 
	\end{enumerate}
		We refer to the supremum of all $p\geq 0$ for which (ii) is satisfied as the \emph{density} of the sequence $\mathbf{G}$ and denote it by $\rho(\mathbf{G})$ .
	\end{definition}	
	
	We can define the analogue of Tur\'an density for $(r,\ell)$-sequences.
	\begin{definition} \label{def:2}
		Let $1 \le \ell < r$.		
		Let $\mathcal{F}$ be a family of nonempty $r$-graphs. 
		Define
		\[\pi^{\star}_{\ell}(\mathcal{F}):=\sup \Bigl\{ \rho(\mathbf{G}): \ \mathbf{G} \textrm{ is an $(r,\ell)$-sequence of $\mathcal{F}$-free $r$-graphs} \Bigr\}.\]
	\end{definition}

Our main result show that every large $r$-graph~$G$ contains a `somewhat large' subgraph~$H$ with minimum $\ell$-degree satisfying $\delta_{\ell}(H) / \binom{v(H)-\ell}{r-\ell} \approx \delta_{\ell}^{\eps}(G) / \binom{v(G)-\ell}{r-\ell}$. Here `somewhat large' means $v(H)= \Omega(\eps^{1/\ell})$.
		
\begin{theorem}\label{theorem: high codeg extraction from wqr 3-graphs}
Let $1 \le \ell < r$. 
For any fixed $\delta>0$, there exists $m_0>0$ such that any $r$-graph~$G$ on $n \geq m \ge m_0$ vertices with $\delta_{\ell}^{\eps} (G) \ge p \binom{n-\ell}{r- \ell}$ for some $\eps\leq m^{-\ell}/2$ contains an induced subgraph~$H$ on $m$ vertices with 
\begin{align*}
	\delta_{\ell} (H) \geq (p-\delta) \binom{ m - \ell }{r - \ell }.
\end{align*}
\end{theorem}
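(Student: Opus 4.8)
The plan is to take $M$ to be a uniformly random $m$-subset of $V(G)$ and to show by a first-moment argument that with positive probability the induced subgraph $H := G[M]$ has no $\ell$-set of small degree. We may assume $p > \delta$, since otherwise $(p-\delta)\binom{m-\ell}{r-\ell}\le 0$ and the subgraph induced on an arbitrary $m$-subset trivially works. Call an $\ell$-set $S\in V(G)^{(\ell)}$ \emph{bad} if $\deg_G(S) < p\binom{n-\ell}{r-\ell}$ and \emph{good} otherwise; by hypothesis $G$ has at most $\eps\binom{n}{\ell}$ bad $\ell$-sets. For any fixed $\ell$-set $S$ we have $\Pr[S\subseteq M] = \binom{n-\ell}{m-\ell}/\binom{n}{m} = \binom{m}{\ell}/\binom{n}{\ell}$, so the expected number of bad $\ell$-sets contained in $M$ is at most $\eps\binom{n}{\ell}\cdot\binom{m}{\ell}/\binom{n}{\ell} = \eps\binom{m}{\ell}\le \tfrac12$, using $\eps\le m^{-\ell}/2$.

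The bulk of the work is to control the good $\ell$-sets. Fix a good $\ell$-set $S$ and condition on the event $S\subseteq M$; then $M\setminus S$ is a uniformly random $(m-\ell)$-subset of $V(G)\setminus S$, and $\deg_{G[M]}(S)$ equals the number of $T\in N_G(S)$ with $T\subseteq M\setminus S$. Averaging over $M\setminus S$ gives
\[
\mathbb{E}\bigl[\deg_{G[M]}(S)\ \big|\ S\subseteq M\bigr]
= \deg_G(S)\cdot\frac{\binom{n-r}{m-r}}{\binom{n-\ell}{m-\ell}}
= \deg_G(S)\cdot\frac{\binom{m-\ell}{r-\ell}}{\binom{n-\ell}{r-\ell}}
\ \ge\ p\binom{m-\ell}{r-\ell}.
\]
To upgrade this to a tail bound I would expose the $m-\ell$ vertices of $M\setminus S$ one at a time and apply the Azuma--Hoeffding inequality to the resulting Doob martingale. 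Replacing one exposed vertex by another changes $\deg_{G[M]}(S)$ by at most the number of $(r-\ell)$-subsets of an $(m-\ell)$-set through a fixed vertex, namely $\binom{m-\ell-1}{r-\ell-1}$, and a routine coupling shows each martingale increment is bounded by this quantity. Since $\binom{m-\ell-1}{r-\ell-1}=\Theta(m^{r-\ell-1})$ while the deviation to be ruled out is $\delta\binom{m-\ell}{r-\ell}=\Theta(m^{r-\ell})$, Azuma's inequality gives
\[
\Pr\Bigl[\deg_{G[M]}(S)<(p-\delta)\binom{m-\ell}{r-\ell}\ \Big|\ S\subseteq M\Bigr]\le 2\exp(-cm)
\]
for some $c=c(\delta,r,\ell)>0$. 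Summing over the at most $\binom{m}{\ell}\le m^\ell$ good $\ell$-sets $S$, the expected number of good $\ell$-sets $S\subseteq M$ with $\deg_{G[M]}(S)<(p-\delta)\binom{m-\ell}{r-\ell}$ is at most $2m^\ell\exp(-cm)$, which is below $\tfrac12$ once $m\ge m_0$ for a suitable $m_0=m_0(\delta,r,\ell)$.

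Adding the two estimates, the expected number of $\ell$-subsets of $M$ that are either bad in $G$ or have degree below $(p-\delta)\binom{m-\ell}{r-\ell}$ in $G[M]$ is strictly less than $1$; hence some $m$-set $M$ avoids both, and then every $\ell$-subset of $M$ is good in $G$ and has $\deg_{G[M]}\ge(p-\delta)\binom{m-\ell}{r-\ell}$, so $H:=G[M]$ is as desired. I expect the concentration step to be the crux: because $n$ may be arbitrarily larger than $m$, one needs a tail bound decaying with the sample size $m$ and not with the ground-set size $n$, which is exactly why the exposure martingale should be run over $M\setminus S$ rather than over all of $V(G)$; establishing the uniform increment bound via the coupling is the one genuinely fiddly point.
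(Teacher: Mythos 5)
Your proposal is correct and takes essentially the same approach as the paper: both split $\ell$-sets by whether $\deg_G(S)\ge p\binom{n-\ell}{r-\ell}$, both control the low-degree $\ell$-sets by the inclusion probability $\binom{m}{\ell}/\binom{n}{\ell}$, and both control the high-degree $\ell$-sets via Azuma's inequality applied to the vertex-exposure martingale for $M\setminus S$ with increment bound $\binom{m-\ell-1}{r-\ell-1}$, finishing with a first-moment/union bound over $m$-subsets. The only difference is cosmetic — you phrase it probabilistically while the paper counts bad $m$-subsets directly — and your phrasing ``summing over the at most $\binom{m}{\ell}$ good $\ell$-sets'' is slightly loose (the sum is really over the up to $\binom{n}{\ell}$ good $\ell$-sets, each weighted by $\Pr[S\subseteq M]=\binom{m}{\ell}/\binom{n}{\ell}$), but the resulting bound $\binom{m}{\ell}\cdot 2\exp(-cm)$ is exactly what the paper obtains.
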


This immediate implies the $\pi_{\ell}^{\star}(\mathcal{F}) = \pi_{\ell}(\mathcal{F})$ for all families $\mathcal{F}$ of $r$-graphs.
\begin{corollary}\label{corollary: pi equals pi star}
	For any $1 \le \ell < r$ and any family~$\mathcal{F}$ of nonempty $r$-graphs, $\pi^{\star}_{\ell}(\mathcal{F}) = \pi_{\ell}(\mathcal{F})$.
\end{corollary}
We note that the (tight) upper bounds for codegree densities $\pi_2(F)$ for $3$-graphs $F$ obtained by flag algebraic methods in~\cite{FalgasRavryMarchantPikhurkoVaughan15, FalgasRavryPikhurkoVaughanVolec17, FalgasRavryPikhurkoVaughanVolec16+} actually relied on giving upper bounds for $\pi^{\star}_{\ell}(F)$. Corollary~\ref{corollary: pi equals pi star} provides theoretical justification for why this strategy could give optimal bounds.

\subsection{Quasirandomness in $3$-graphs}
One of the main motivations for this note comes from recent work of Reiher, R\"odl and Schacht~\cite{ReiherRodlSchacht16a, ReiherRodlSchacht16d,  ReiherRodlSchacht16b, ReiherRodlSchacht16c} on extremal questions for quasirandom hypergraphs.
These authors studied the following notion of quasirandomness for $3$-graphs. 
	
\begin{definition}[(1,2)-quasirandomness]
A $3$-graph $G$ is \emph{$(p, \varepsilon, (1,2))$-quasirandom} if for every set of vertices $X\subseteq V$ and every set of pairs of vertices $P\subseteq V^{(2)}$, the number $e_{1,2}(X,P)$ of pairs $(x, uv)\in X\times P$ such  that $\{x\}\cup \{uv\}\in E(G)$ satisfies:
		\[\Bigl\vert e_{1,2}(X, P) - p \vert X\vert \cdot \vert P\vert \Bigr\vert \leq \varepsilon {v(G)}^3.\]
	\end{definition}

We define a $(1,2)$-quasirandom sequence and the corresponding extremal density, denoted by $\pi_{(1,2)-qr}(\mathcal{F})$, analogously to the way we defined $(r,\ell)$-sequences and $\pi_{\ell}^{\star}(\mathcal{F})$ in Definitions~\ref{def:1} and~\ref{def:2}.
It is not difficult to see that $\pi_{(1,2)-qr}(\mathcal{F}) \le \pi (\mathcal{F})$ for all families $\mathcal{F}$ of $3$-graphs. 
Moreover, a $(p, \varepsilon, (1,2))$-quasirandom $3$-graph~$G$ satisfies $\delta_{2}^{ \sqrt{\eps}}(G) \ge (p - 4\sqrt{\eps}) v(G)$. 
Hence, Theorem~\ref{theorem: high codeg extraction from wqr 3-graphs} and Corollary~\ref{corollary: codegree density is an ub for wqr density} imply the following. 
\begin{corollary}\label{corollary: codegree density is an ub for wqr density}
For any family of nonempty $3$-graphs $\mathcal{F}$, $\pi_{(1,2)-qr}(\mathcal{F})\leq \pi_2(\mathcal{F})$. 
\end{corollary}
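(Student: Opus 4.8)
The plan is to show that every $(1,2)$-quasirandom sequence of $\mathcal{F}$-free $3$-graphs is also a $(3,2)$-sequence of $\mathcal{F}$-free $3$-graphs whose density is at least as large; this yields $\pi_{(1,2)-qr}(\mathcal{F})\le\pi_2^\star(\mathcal{F})$, and the corollary then follows at once from Corollary~\ref{corollary: pi equals pi star}, which identifies $\pi_2^\star(\mathcal{F})$ with $\pi_2(\mathcal{F})$.

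The only genuine computation is the quasirandomness estimate quoted in the text: if $G$ is $(p,\eps,(1,2))$-quasirandom on $N$ vertices then $\delta_2^{\sqrt\eps}(G)\ge(p-4\sqrt\eps)N$. I would prove it by applying the defining inequality of $(1,2)$-quasirandomness with $X=V(G)$ and with $P$ the set of pairs $uv\in V(G)^{(2)}$ such that $\deg(uv)<(p-4\sqrt\eps)N$. Since $e_{1,2}(V(G),P)=\sum_{uv\in P}\deg(uv)<(p-4\sqrt\eps)N\,|P|$, while $(1,2)$-quasirandomness forces $e_{1,2}(V(G),P)\ge pN\,|P|-\eps N^3$, we obtain $4\sqrt\eps\,N\,|P|<\eps N^3$, i.e.\ $|P|<\tfrac14\sqrt\eps\,N^2\le\sqrt\eps\binom{N}{2}$. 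Thus all but at most $\sqrt\eps\binom N2$ pairs have degree at least $(p-4\sqrt\eps)N$, and since $\binom{N-2}{1}=N-2\le N$ this says precisely that $\delta_2^{\sqrt\eps}(G)\ge(p-4\sqrt\eps)\binom{N-2}{1}$.

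Now let $\mathbf{G}=(G_n)_{n\in\mathbb N}$ be any $(1,2)$-quasirandom sequence of $\mathcal{F}$-free $3$-graphs and fix $q<\rho(\mathbf{G})$. Because $\rho(\mathbf{G})$ is defined as a supremum, there are a constant $p\in(q,1]$ and reals $\eps_n\to0$ with each $G_n$ being $(p,\eps_n,(1,2))$-quasirandom; by the previous paragraph $\delta_2^{\sqrt{\eps_n}}(G_n)\ge(p-4\sqrt{\eps_n})\binom{v(G_n)-2}{1}$. Since $4\sqrt{\eps_n}\to0$ we have $p-4\sqrt{\eps_n}\ge q$ for all large $n$, so deleting the finitely many terms where this fails yields a sequence with null error terms $\eps_n':=\sqrt{\eps_n}\to0$ satisfying $\delta_2^{\eps_n'}(G_n)\ge q\binom{v(G_n)-2}{1}$ for every remaining $n$. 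As $v(G_n)\to\infty$ too, this tail is a $(3,2)$-sequence of $\mathcal{F}$-free $3$-graphs of density at least $q$, whence $\pi_2^\star(\mathcal{F})\ge q$. Letting $q\uparrow\rho(\mathbf{G})$ and then taking the supremum over all such $\mathbf{G}$ gives $\pi_{(1,2)-qr}(\mathcal{F})\le\pi_2^\star(\mathcal{F})$, and Corollary~\ref{corollary: pi equals pi star} gives $\pi_2^\star(\mathcal{F})=\pi_2(\mathcal{F})$, as required.

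There is no real obstacle here: all the substance sits inside Theorem~\ref{theorem: high codeg extraction from wqr 3-graphs} (via Corollary~\ref{corollary: pi equals pi star}). The only points requiring a little care are converting the $\eps N^3$-type error in the quasirandomness hypothesis into a bound of the form $\eps'\binom N2$ on the number of low-degree pairs, and the minor bookkeeping that replaces the index-dependent coefficient $p-4\sqrt{\eps_n}$ by the constant required in Definition~\ref{def:1}; both are routine.
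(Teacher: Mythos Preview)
Your proposal is correct and follows essentially the same route as the paper: verify the claimed estimate $\delta_2^{\sqrt\eps}(G)\ge(p-4\sqrt\eps)v(G)$ for $(p,\eps,(1,2))$-quasirandom $G$, deduce that any $(1,2)$-quasirandom $\mathcal F$-free sequence is a $(3,2)$-sequence of at least the same density, and finish via Corollary~\ref{corollary: pi equals pi star}. You have simply filled in the details the paper leaves implicit (the choice $X=V(G)$, $P=\{\text{low-degree pairs}\}$ in the quasirandomness inequality, and the bookkeeping converting $p-4\sqrt{\eps_n}$ to a fixed constant in Definition~\ref{def:1}); note also that the paper's sentence ``Theorem~\ref{theorem: high codeg extraction from wqr 3-graphs} and Corollary~\ref{corollary: codegree density is an ub for wqr density} imply the following'' contains an evident self-referential typo and should cite Corollary~\ref{corollary: pi equals pi star}, exactly as you do.
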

Consider a $(p, \varepsilon, (1,2))$-quasirandom $3$-graph~$G$  for some $p>4\sqrt{\eps}>0$.  As noted above,  $\delta_{2}^{ \sqrt{\eps}}(G) \ge (p- 4\sqrt{\eps}) v(G)$. Thus provided $v(G)$ is sufficiently large, Theorem~\ref{theorem: high codeg extraction from wqr 3-graphs} tells us we can find a subgraph $H$ of $G$ on $m=\Omega(\eps^{-1/4})$ vertices with strictly positive minimum codegree (at least $(p-4\sqrt{\eps})m$).

However, as we show below, we cannot guarantee the existence of any subgraph with strictly positive codegree on more than $2/\varepsilon+1$ vertices: our lower bound on~$m$ above in terms of an inverse power of the error parameter $\varepsilon$ is thus sharp up to the value of the exponent. 
	\begin{proposition}\label{proposition: wqr graph with no large subgraphs with codeg density>0}
		For every $p\in(0,1)$ and every $\varepsilon>0$, there exists $n_0$ such that for all $n\geq n_0$ there exist $(p, 2\varepsilon, (1,2))$-quasirandom $3$-graphs in which every subgraph on  $m\geq \lfloor \varepsilon^{-1} \rfloor +1$ vertices has minimum codegree equal to zero. 
	\end{proposition}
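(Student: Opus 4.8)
The plan is to superimpose a balanced vertex partition onto a quasirandom background and then delete every edge that is ``non-transversal'' for that partition. Before doing so I would dispose of the easy range $\varepsilon\ge 1/4$: for \emph{any} $3$-graph on $n$ vertices, any $X\subseteq V$ and any $P\subseteq V^{(2)}$ we have $e_{1,2}(X,P)\le|X|\cdot|P|<n^3/2$ and $0\le p|X|\cdot|P|<n^3/2$, so $|e_{1,2}(X,P)-p|X||P||<n^3/2\le 2\varepsilon n^3$ once $\varepsilon\ge 1/4$; hence in that range the edgeless $3$-graph is $(p,2\varepsilon,(1,2))$-quasirandom and every one of its subgraphs on at least two vertices has minimum codegree $0$, so it suffices to treat $\varepsilon<1/4$.

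For $\varepsilon<1/4$, put $k=\lfloor\varepsilon^{-1}\rfloor$, note $k\ge 4$, and partition $[n]$ into parts $V_1,\dots,V_k$ of sizes $\lfloor n/k\rfloor$ or $\lceil n/k\rceil$. Call a triple \emph{transversal} if its three vertices lie in three distinct parts and let $\mathcal{D}\subseteq[n]^{(3)}$ be the set of non-transversal triples. I would fix a small constant $\delta=\delta(\varepsilon)>0$ (pinned down below), let $G_0$ be a realization of the binomial random $3$-graph $G(n,p)$ that is $(p,\delta,(1,2))$-quasirandom --- almost every realization is, for $n$ large, by Hoeffding's inequality and a union bound over the at most $2^{n^2}$ pairs $(X,P)$ --- and set $G:=G_0\setminus\mathcal{D}$, i.e.\ delete from $G_0$ every edge that happens to be a non-transversal triple. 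The subgraph condition is then immediate from pigeonhole: if $W\subseteq[n]$ has $|W|\ge\lfloor\varepsilon^{-1}\rfloor+1=k+1$, then two distinct vertices $u,v\in W$ lie in one part $V_i$; every triple through $\{u,v\}$ is non-transversal and hence not an edge of $G$, so $\deg_G(\{u,v\})=0$ and therefore $\delta_2(H)=0$ for every subgraph $H$ with $\{u,v\}\subseteq V(H)\subseteq W$.

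It remains to verify that $G$ is $(p,2\varepsilon,(1,2))$-quasirandom. Here I would use that $e_{1,2}$ is additive over disjoint edge sets and that each deleted edge is responsible for at most three pairs $(x,\{u,v\})\in X\times P$, so that $0\le e_{1,2}^{G_0}(X,P)-e_{1,2}^{G}(X,P)\le 3|\mathcal{D}|$, and hence $|e_{1,2}^{G}(X,P)-p|X||P||\le\delta n^3+3|\mathcal{D}|$ for all $X,P$. The key numerical input is that every non-transversal triple contains a within-part pair, giving $|\mathcal{D}|\le(n-2)\sum_i\binom{|V_i|}{2}\le\tfrac{n(n+k)^2}{2k}=(1+o(1))\tfrac{n^3}{2k}$ (the sum being minimised by equalising the parts). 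Since $\varepsilon<1/4$ forces $4\varepsilon\lfloor\varepsilon^{-1}\rfloor>4\varepsilon(\varepsilon^{-1}-1)>3$, i.e.\ $\tfrac{3}{2k}<2\varepsilon$, one can first choose $\delta>0$ and then $n_0$ large enough that $\delta+\tfrac{3(1+o(1))}{2k}\le 2\varepsilon$ for all $n\ge n_0$; this yields $|e_{1,2}^{G}(X,P)-p|X||P||\le 2\varepsilon n^3$ uniformly over $X$ and $P$, completing the construction.

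The step I expect to be the main obstacle is exactly this last estimate. Deleting the non-transversal triples can subtract as much as roughly $\tfrac{3}{2}\varepsilon n^3$ from $e_{1,2}$, while the quasirandomness tolerance only widens from $\delta n^3$ to $2\varepsilon n^3$; so one must use the bound $|\mathcal{D}|\lesssim n^3/(2\lfloor\varepsilon^{-1}\rfloor)$ essentially optimally (whence the balanced partition), and the slack degrades to $0$ as $\varepsilon\to 1/4^-$. That degradation is precisely why it is cleaner to peel off the range $\varepsilon\ge 1/4$ beforehand with the trivial edgeless construction rather than trying to push the partition argument through everywhere.
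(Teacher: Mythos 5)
Your construction is the same as the paper's: partition $V$ into $\lfloor\varepsilon^{-1}\rfloor$ balanced parts, delete from a quasirandom base every triple meeting some part in at least two vertices, and use pigeonhole to get the zero-codegree claim. Your bookkeeping is in fact sharper than the paper's: you correctly track that each deleted edge can decrease $e_{1,2}(X,P)$ by up to $3$, you start from a base graph with error parameter $\delta\ll\varepsilon$ rather than $\varepsilon$ itself, and you peel off the range $\varepsilon\ge 1/4$ --- the paper deletes up to $\varepsilon n^3$ edges from a $(p,\varepsilon,(1,2))$-quasirandom base and asserts $(p,2\varepsilon,(1,2))$-quasirandomness, but as written that only yields error $4\varepsilon n^3$ without exactly the adjustments you make. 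Same approach, tighter execution of the final estimate.
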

	
	\begin{proof}
		Let $G=(V,E)$ be a $(p, \varepsilon, (1,2))$-quasirandom $3$-graph on $n$ vertices. Such a $3$-graph can be obtained for example by taking a typical instance of an Erd{\H o}s--R\'enyi random $3$-graph with edge probability $p$.
		Consider a balanced partition of $V$ into $N=\lfloor \varepsilon^{-1}\rfloor$ sets $V=\bigcup_{i=1}^N V_i$ with $\lfloor n/N\rfloor \leq \vert V_1\vert\leq \vert V_2\vert \leq \ldots \leq \vert V_N\vert \leq \lceil n/N\rceil$. Now let $G'$ be the $3$-graph obtained from $G$ by deleting all triples that meet some $V_i$ in at least two vertices for some $i$: $1\leq i \leq N$. 
		
		By construction, every set of $N+1$ vertices in $G'$ must contain at least two vertices from the same $V_i$, and thus must induce a subgraph of $G'$ with minimum codegree zero.
		Note that $e(G) - e(G') \le N n \binom{\lceil n/N\rceil}{2} \le n^3/N \le \varepsilon n^3$.
		Since $G$ is $(p, \varepsilon, (1,2))$-quasirandom, it follows that $G'$ is $(p, 2\varepsilon, (1,2))$-quasirandom.
	\end{proof}

	\section{Finding high minimum $\ell$-degree subgraphs in $r$-graphs with large~$\delta_{\ell}^{\varepsilon}$}

	In this section we show how we can extract arbitrarily large subgraphs with high minimum $\ell$-degree from sufficiently large $r$-graphs with sufficiently small error $\varepsilon$.
	To do so, we will need Azuma's inequality (see e.g.~\cite{MR1885388}).

\begin{lemma}[Azuma's inequality] \label{lem:azuma}
Let $\{ X_i: i=0, 1, \dots\}$ be a martingale with $|X_i - X_{i-1}|  \le c_i$ for all $i$.
Then for all positive integers $N$ and $\lambda >0$,
\begin{align*}
\mathbb{P}(X_N \le X_0 - \lambda) \le \exp \left( \frac{-\lambda^2}{2 \sum_{i=1}^N c_i^2}\right).
\end{align*}
\end{lemma}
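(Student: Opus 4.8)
The plan is to prove this via the standard exponential moment (Chernoff) method, combined with Hoeffding's lemma on bounded zero-mean random variables. First I would reduce to the case $X_0 = 0$ by replacing each $X_i$ with $X_i - X_0$; this is still a martingale (with respect to the same filtration) with the same increments, and the claimed bound is invariant under this shift. Write $\mathcal{F}_i$ for the $\sigma$-algebra generated by $X_0, \dots, X_i$ and put $Y_i = X_i - X_{i-1}$, so that $\mathbb{E}[Y_i \mid \mathcal{F}_{i-1}] = 0$ and $|Y_i| \le c_i$ almost surely. For any $t > 0$, Markov's inequality applied to the nonnegative random variable $e^{-tX_N}$ gives
\[
\mathbb{P}(X_N \le -\lambda) = \mathbb{P}\bigl(e^{-tX_N} \ge e^{t\lambda}\bigr) \le e^{-t\lambda}\,\mathbb{E}\bigl[e^{-tX_N}\bigr].
\]

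The next step is to bound $\mathbb{E}[e^{-tX_N}]$ by peeling off one increment at a time. Since $X_N = X_{N-1} + Y_N$ and $e^{-tX_{N-1}}$ is $\mathcal{F}_{N-1}$-measurable, the tower property yields
\[
\mathbb{E}\bigl[e^{-tX_N}\bigr] = \mathbb{E}\Bigl[e^{-tX_{N-1}}\,\mathbb{E}\bigl[e^{-tY_N} \mid \mathcal{F}_{N-1}\bigr]\Bigr].
\]
Here I would invoke (the conditional form of) Hoeffding's lemma: if $Y$ satisfies $\mathbb{E}[Y \mid \mathcal{G}] = 0$ and $|Y| \le c$ almost surely, then $\mathbb{E}[e^{sY} \mid \mathcal{G}] \le e^{s^2 c^2/2}$ for every real $s$. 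Applying this with $s = -t$ and $c = c_N$ gives $\mathbb{E}[e^{-tY_N} \mid \mathcal{F}_{N-1}] \le e^{t^2 c_N^2/2}$, hence $\mathbb{E}[e^{-tX_N}] \le e^{t^2 c_N^2/2}\,\mathbb{E}[e^{-tX_{N-1}}]$. Iterating this inequality down to $X_0 = 0$ gives $\mathbb{E}[e^{-tX_N}] \le \exp\bigl(\tfrac{t^2}{2}\sum_{i=1}^N c_i^2\bigr)$, and therefore $\mathbb{P}(X_N \le -\lambda) \le \exp\bigl(-t\lambda + \tfrac{t^2}{2}\sum_{i=1}^N c_i^2\bigr)$. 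Optimising the exponent over $t > 0$ — the minimiser is $t = \lambda / \sum_{i=1}^N c_i^2$ — produces exactly the claimed bound $\exp\bigl(-\lambda^2 / (2\sum_{i=1}^N c_i^2)\bigr)$.

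The only genuine content beyond this bookkeeping is Hoeffding's lemma, which is where I would spend care (rather than expecting a real obstacle). I would prove it by convexity: for fixed $c > 0$, $s \in \mathbb{R}$ and any $y \in [-c,c]$ we can write $y = \tfrac{c-y}{2c}(-c) + \tfrac{c+y}{2c}(c)$ as a convex combination of $\pm c$, so convexity of $x \mapsto e^{sx}$ gives $e^{sy} \le \tfrac{c-y}{2c}e^{-sc} + \tfrac{c+y}{2c}e^{sc}$. Taking conditional expectations and using $\mathbb{E}[Y \mid \mathcal{G}] = 0$ yields $\mathbb{E}[e^{sY} \mid \mathcal{G}] \le \tfrac12(e^{-sc} + e^{sc}) = \cosh(sc)$, and it then suffices to note the elementary inequality $\cosh(u) \le e^{u^2/2}$ for all real $u$, which follows from comparing Taylor coefficients since $\cosh u = \sum_{k \ge 0} u^{2k}/(2k)!$, $e^{u^2/2} = \sum_{k \ge 0} u^{2k}/(2^k k!)$, and $(2k)! \ge 2^k k!$ termwise. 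Setting $u = sc$ completes the lemma, and hence the proof. (If the companion upper-tail bound $\mathbb{P}(X_N \ge X_0 + \lambda)$ were needed, one applies the identical argument to the martingale $-X_i$, which has the same increment bounds; only the stated lower-tail form is required here.)
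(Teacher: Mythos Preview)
Your proof is correct and is the standard exponential-moment argument for Azuma's inequality. The paper itself does not prove this lemma: it is stated with a reference to Alon--Spencer's \emph{The Probabilistic Method} and used as a black box, so there is no ``paper's own proof'' to compare against. Your argument is essentially the textbook proof one would find in that reference.
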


\begin{proof}[Proof of Theorem~\ref{theorem: high codeg extraction from wqr 3-graphs}]
We may assume without loss of generality that  $\delta>0$ is small enough to ensure $\delta^{-1} \ge 26 \ell (r - \ell)^2 \log (1/\delta)$ and $ \ell \log (1/\delta) \ge \log 2$ as this only makes our task harder.
Set $m_0 = \left\lceil  26 \ell (r - \ell)^2 \delta^{-2} \log (1/\delta) \right\rceil$.
Note that this implies that 
\begin{align}
\label{eqn:m_0}
		2\ell \log m_0 & \le 4 \ell \log \left(  26 \ell (r - \ell)^2 \delta^{-2} \log (1/\delta) \right)
		\le 12 \ell \log (1/\delta).
\end{align}
Fix $m \geq m_0$. 
Let $n\geq m\geq m_0$ and $\varepsilon = m^{-\ell}/2$.

Suppose $G=(V,E)$ is an $r$-graph on $n$ vertices with $\delta^{\eps}_{\ell}(G) \ge  p \binom{n - \ell}{r - \ell}$.
We claim that it contains an induced subgraph on $m$ vertices with minimum $\ell$-degree at least $(p-\delta)\binom{m - \ell}{r - \ell}$.
For $p\leq \delta$, we have nothing to prove, so we may assume that $1\geq p>\delta$.

Call an $\ell$-subset $S \in V^{(\ell)}$ \emph{poor} if $\deg (S) <  p \binom{n - \ell}{r - \ell}$, and \emph{rich} otherwise.
Let $\mathcal{P}$ be the collection of all poor $\ell$-subsets.
By our assumption on $\delta^{\eps} _{\ell}(G)$, $\vert \mathcal{P}\vert \leq \varepsilon \binom{n}{\ell}$.
As each poor $\ell$-subset is contained in $\binom{n-\ell}{m-\ell}$ $m$-subsets, it follows that there are at least
	\begin{align}\label{eq: lower bound on rich m-subsets}
		\binom{n}{m}- \vert \mathcal{P} \vert \binom{n-\ell}{m-\ell}
		> \bigl(1-  \varepsilon m^\ell \bigr)\binom{n}{m} 
		= \frac12\binom{n}{m}
		\end{align}
$m$-subsets of vertices which do not contain any poor $\ell$-subsets.
		
Given an $\ell$-subset $ S \in V^{(\ell)} \setminus \mathcal{P} $, we call an $m$-subset~$T$ of~$V$ \textit{bad for}~$S$ if $S \subseteq T$ and $\left\vert N(S) \cap T^{(r-\ell)} \right\vert \le (p-\delta)\binom{m- \ell}{r - \ell}$.
Let $\phi_S$ be the number of bad $m$-subsets for~$S$.
We claim that 
\begin{align}
	\phi_S
	\le \binom{ n - \ell }{ m - \ell } \exp \left( - \frac{\delta^2 m }{2(r-\ell)^2} \right). \label{eqn:eqnS}
\end{align}
Observe that
\begin{align*}
	\phi_S = \left\vert \left\{ 
 T \in  (V \setminus S )^{(m-\ell)} \colon \left| N (S) \cap T^{(r-\ell)} \right| \le (p-  \delta) \binom{m- \ell}{r - \ell}
		\right\} \right\vert.
\end{align*}
Let $X$ be the random variable $\left| N (S) \cap T^{(r-\ell)} \right|$, where $T$ is an $(m-\ell)$-subset of~$V \setminus S$ picked uniformly at random.
We consider the vertex exposure martingale on $T$.
Let $Z_i$ be the $i$th exposed vertex in $T$.
Define $X_i = \mathbb{E}(X|Z_1, \dots, Z_i)$.
Note that $\{ X_i: i=0, 1, \dots, m - \ell \}$ is a martingale and $X_0 \ge p \binom{ m - \ell }{r-\ell}$.
Moreover, $|X_i - X_{i-1}| \le  \binom{m-\ell-1}{r-\ell-1} < \binom{m-1}{r - \ell - 1}$.
Thus, by Lemma~\ref{lem:azuma} applied with $\lambda = \delta \binom{m}{r-\ell}$ and $c_i = \binom{m-1}{r - \ell - 1} $, we have
\begin{align*}
	\mathbb{P} \left( X_m \le (p- \delta) \binom{ m - \ell }{r-\ell}\right) 
	& \le \mathbb{P}(X_m \le X_0 - \lambda)
	\le  \exp \left( \frac{-\delta^2 \binom{m}{r-\ell}^2}{2 m \binom{m-1}{r-\ell-1}^2}\right) 
	= \left( \frac{-\delta^2 \binom{m}{r-\ell}}{2 (r-\ell)}\right) \\
	& \le \exp \left( - \frac{ \delta^2 m}{2(r-\ell)^2} \right).
\end{align*}
Hence \eqref{eqn:eqnS} holds.

An $m$-subset~$T$ of~$V$ is called \textit{bad} if it is bad for some~$S \in V^{(\ell)} \setminus \mathcal{P} $.
The number of bad $m$-subsets is at most 
\begin{align*}
	  \sum_{ S \in V^{(\ell)} \setminus \mathcal{P} }  \phi_S 
		& \le \binom{n}{\ell}   \binom{n- \ell}{m - \ell} \exp \left( - \frac{\delta^2 m }{2(r-\ell)^2} \right)
		= \binom{n}{m} \binom{m}{\ell } \exp \left( - \frac{\delta^2 m }{2(r-\ell)^2} \right) \\
		& \le  \binom{n}{m}  m_0^{\ell}  \exp \left( - \frac{\delta^2 m_0 }{2(r-\ell)^2} \right)
		\le \binom{n}m \exp \left( 2\ell \log m_0 - 13 \ell \log (1/\delta) \right)\\
		&  \le \binom{n}m \exp \left( - \ell \log (1/\delta) \right) \le \frac{1}{2} \binom{n}{m},
		\end{align*}
where the last three inequalities hold by our choice of $m_0$, by inequality~\eqref{eqn:m_0}, and by our assumption on $\delta$, respectively.
Together with (\ref{eq: lower bound on rich m-subsets}), this shows there exists an $m$-subset inside which there is no poor $\ell$-subsets and in which every rich $\ell$-subset has degree at least $(p-\delta ) \binom{m- \ell }{r - \ell}$.
Such a set clearly gives us an induced subgraph of $G$ on $m$ vertices with minimum $\ell$-degree at least $(p-\delta)\binom{m- \ell }{r - \ell}$.
	\end{proof}

\section{Concluding remarks}

A $3$-graph $G$ is \emph{$(p, \varepsilon, (1,1,1))$-quasirandom} if for every triple of sets of vertices $X$, $Y$ and $Z\subseteq V$, the number $e_{1,1,1}(X,Y,Z)$ of triples $(x, y,z)\in X\times Y\times Z$ such  that $xyz\in E(G)$ satisfies $\Bigl\vert e_{1,1,1}(X, Y, Z) - p \vert X\vert \cdot \vert Y\vert \cdot \vert Z\vert \Bigr\vert \leq \varepsilon {v(G)}^3$. Define $\pi_{(1,1,1)-qr}(\mathcal{F})$ analogously to $\pi_{(1,2)-qr}(\mathcal{F})$.
Note that $\pi_{(1,2)-qr}(\mathcal{F}) \le \pi_{(1,1,1)-qr}(\mathcal{F}) \le \pi(\mathcal{F})$ for all $3$-graph families $\mathcal{F}$.
An obvious open question is whether we have
	\[\pi_{(1,1,1)-qr}(\mathcal{F})\leq \pi_2(\mathcal{F}).\]
Even more: can one always extract subgraphs with large minimum codegree from $(1,1,1)$-quasirandom graphs? Even obtaining large subgraphs with non-zero minimum codegree remains an open problem for this weaker notion of quasirandomness. 
	
\section*{Acknowledgements}
	The authors are grateful for a Scheme 4 grant from the London Mathematical Society which allowed Victor Falgas-Ravry to visit Allan Lo in Birmingham in July 2016, when this research was done.

	Further, the authors would like to thank anonymous referees at the Electronic Journal of Combinatorics for their careful work and helpful suggestions,  which led to considerable improvements in the paper ---  in particular, their comments led us to state and prove a much more general form of Theorem~\ref{theorem: high codeg extraction from wqr 3-graphs} than we had in the first version of this paper.


\end{document}